\title{Thick groups have trivial Floyd boundary}
\author{Ivan Levcovitz}
\date{}
\newtheorem{theorem}{Theorem}[section]
\newtheorem{lemma}[theorem]{Lemma}
\newtheorem{proposition}[theorem]{Proposition}
\newtheorem{remark}[theorem]{Remark}
\newtheorem{introquestion}{Question}
\newtheorem*{maintheorem}{Main Theorem}
\theoremstyle{definition}
\newtheorem{definition}[theorem]{Definition}
\begin{document}
\maketitle

\begin{abstract}
	 We prove that thick groups (and more generally thick graphs) have trivial Floyd boundary. This shows a wide class of finitely generated groups that are non-relatively hyperbolic have trivial Floyd boundary. In addition to giving new examples, our result provides a common proof and framework for many of the known results in the literature.
\end{abstract}

Floyd introduced one of the first compactifications of an arbitrary finitely generated group \cite{Floyd}, now known as the Floyd boundary. The Floyd boundary in general depends on a choice of a scaling function known as a Floyd function. This compactification turns out to be strongly linked to the theory of relatively hyperbolic groups, as a crucial result of Gerasimov shows there is a continuous equivariant map from the Floyd boundary of a relatively hyperbolic group to the group's Bowditch boundary \cite{Gerasimov}.
 
Gerasimov's result is the culmination of a series of previous results showing there are continuous equivariant maps from the Floyd boundary to other commonly studied ``hyperbolic boundaries'' of groups. Given a geometrically finite discrete subgroup of Isom($\mathbb{H}^3$) (the isometry group of hyperbolic $3$--space), Floyd shows there is a continuous equivariant map from the Floyd boundary to the group's limit set when the Floyd function is taken to be $\frac{1}{n^2+1}$ \cite{Floyd}. Tukia generalizes Floyd's result to geometrically finite discrete subgroups of Isom($\mathbb{H}^m$) for integers $m > 3$ \cite{Tukia}. Given a hyperbolic group, Gromov shows its Floyd boundary is homeomorphic to the Gromov boundary \cite{Gromov_hyperbolic}. Both Gromov's and Gerasimov's result use Floyd function $\lambda^n$, for some $0 < \lambda < 1$. Roughly speaking, more rapidly decreasing Floyd functions detect less points of the boundary. Gerasimov's result, in particular, strengthens Tukia's and Floyd's results on Kleinian groups. 

We remark that the Floyd boundary has been shown to have connections to many other subject areas such as convergence actions \cite{K}, random walks on groups \cite{GGPW}, asymptotic cones \cite{OOS} and acylindrical hyperbolicity \cite{Sun, Wen-yuan}. Additionally, Gerasimov's map is further studied in \cite{GP}.

A consequence of Gerasimov's result is that the Floyd boundary of a relatively hyperbolic group is non-trivial. On the other hand, there do not seem to be any known examples of groups that are non-relatively hyperbolic and have non-trivial Floyd boundary:

\begin{introquestion}
	Is the Floyd boundary trivial for every finitely generated, non-relatively hyperbolic group?
\end{introquestion}

A positive answer to the above question, together with Gerasimov's result, would show that the existence of a non-trivial Floyd boundary characterizes relatively hyperbolic groups. We note that the above question was also asked in \cite[Problem 7.11]{OOS}.

Thick groups (and more generally thick metric spaces) were introduced in \cite{BDM}, and these authors show thick groups are non-relatively hyperbolic. We give an affirmative answer to the above question for the class of thick groups and more generally thick graphs:

\begin{maintheorem}
	Let $X$ be a thick graph, then the Floyd boundary of $X$ is one point with respect to any Floyd function. In particular, thick groups have trivial Floyd boundary.
\end{maintheorem}

We note that we use a slightly strengthened version of thickness than the original definition. However, our definition is still weaker than the one given in \cite{BD}. We refer the reader to Section \ref{sec_thick_overview} for the definition of thickness and to Section \ref{sec_main_thm} for the definition of a thick graph.

Many commonly studied non-relatively hyperbolic groups are known to be thick in all mentioned definitions and by the above theorem have trivial Floyd boundary. We list some of these examples:
\begin{enumerate}
	\itemsep0em 
	\item \label{ex_mcg} Mapping class groups of surfaces satisfying $3g + n -3 >1$ where $g$ is the genus and $n$ is the number of boundary components \cite{BDM} \cite{Beh} 
	\item \label{ex_cox} Coxeter groups that are non-relatively hyperbolic \cite{BHSC}
	\item \label{ex_artin} Artin groups that are non-relatively hyperbolic (equivalently those with connected defining graph) \cite{BDM} \cite{CP}
	\item \label{ex_out} Out($F_n$) and Aut($F_n$) for $n \ge 3$ \cite{BDM}
	\item \label{ex_3M} Fundamental group of a non-geometric graph $3$--manifold \cite{BD}
	\item \label{ex_free_by_cyclic} Non-relatively hyperbolic free-by-cyclic groups \cite{Hag3}
	\item \label{ex_product} The product of two infinite groups \cite{BDM}
	\item \label{ex_law} Groups satisfying a law, such as Solvable groups and Burnside groups \cite{BDM}
	\item \label{ex_central} Groups with a central element of infinite order \cite{BDM}
	\item \label{ex_graphs} Graphs of groups with infinite edge groups and with vertex groups thick of order at most $n$ \cite{BD}
	\item \label{ex_teich} Teichm\"{u}ller space with the Weil-Peterson metric for surfaces of type $3g + n - 3 \ge 6$ (with $g$ and $n$ as in \ref{ex_mcg}) is quasi-isometric to a thick graph \cite{BDM}
\end{enumerate}

Many of the above groups were previously known to have trivial Floyd boundary, and we review some of these results. Floyd shows both that the product of two infinite groups and Nilpotent groups have trivial boundary \cite{Floyd}. Karlsson proves that if a group does not contain a non-abelian free group of rank $2$, then its Floyd boundary is trivial \cite{K}. Karlsson-Noskov give conditions on a group's generating set that imply trivial Floyd boundary \cite{KN}. In particular, it can be deduced from this last result that Artin groups with connected graph, Aut($F_n$) (for $n \ge 5$) and the mapping class groups listed above have trivial Floyd boundary. Our main theorem unifies many of these known results. Furthermore, our argument does not rely on a group action and uses only the underlying metric structure of the group's Cayley graph. As far as we are aware, it is a new result that the Floyd boundary is one point for the groups/spaces listed in  \ref{ex_cox},  \ref{ex_out} (for the cases not mentioned above as known), \ref{ex_3M}, \ref{ex_free_by_cyclic}, \ref{ex_graphs} and \ref{ex_teich}.

The results from \cite{K} imply that a finitely generated group admits a convergence action on its Floyd boundary. Consequently, if a finitely generated group does not admit any non-trivial convergence action, then its Floyd boundary must be trivial. An answer to the following question would be very interesting as it would provide us with a better understanding of the connection between convergence actions, thickness (strictly a metric property) and the Floyd boundary:

\begin{introquestion} \label{intro_q_2}
	Is there a thick group that admits a non-trivial convergence action?
\end{introquestion}

\subsubsection*{Acknowledgments:} I thank my doctoral advisor Jason Behrstock for his guidance while much of this was written. I am also thankful to Ilya Gekhtman and Victor Gerasimov for useful conversations regarding the Floyd boundary. I would also like to thank the anonymous referees for their excellent comments and suggestions. A hypothesis on the Floyd function in the main theorem of an earlier draft was dropped thanks to a referee's observation. Additionally, the important connection to convergence actions, including Question \ref{intro_q_2} of the introduction, was pointed out by both a referee and Gerasimov.

\section{Preliminaries} \label{sec_prelims}
Let $(X, d)$ be a metric space. We let $B_x(r)$ denote the ball of radius $r \ge 0$ centered at a point $x \in X$. Given a subspace $Y \subset X$, we let $N_r(Y)$ denote the $r$--neighborhood of $Y$.

\subsection{Floyd Boundary} 
In the definition of a Floyd boundary, there is some choice in which scaling functions are permissible. We follow the definition from \cite{GP}.

Let $X$ be a locally finite connected graph endowed with a basepoint $b \in X$. For instance, one can take $X$ to be the Cayley graph of a finitely generated group and $b$ the identity element. Let $d(~,~)$ be the path metric on this graph where each edge is assigned length $1$. Given a path $p$ in $X$, we let $|p|$ denote its length.

Let $f: \mathbb{Z}^{+} \to \mathbb{R}^{+}$ be a function satisfying the following two conditions: 
\begin{enumerate}[label=\alph*)]
	\item \label{floyd_func_cond_a} $\exists K \ge 1$ such that $\forall n \in \mathbb{Z}^{+}$ : $1 \le \frac{f(n)}{f(n+1)} \le K$
	
	\item \label{floyd_func_cond_b} $\Sigma_{n = 1}^{\infty}{f(n)} < \infty$ 
\end{enumerate}

We call $f$ a \textit{Floyd function}. To simplify the construction of the Floyd boundary, for any Floyd function, $f$, we define $f(0) := f(1)$. For most purposes it is sufficient to consider the Floyd function $f(n) = \frac{1}{n^2}$.

We construct a new metric space, $X_f$, by assigning a length to each edge of $X$ that depends on the edge's distance from $b$. As graphs (without a metric) $X_f$ and $X$ are isomorphic. The length of an edge $e \in X_f$ between vertices $\{v_1, v_2\}$ is $f(n)$, where $n = d(b, e) = d(b,\{v_1,v_2\})$. We call this the \textit{Floyd length} of $e$.

If $p$ is a path in $X_f$, given by consecutive edges $e_1, e_2, ..., e_n$, its \textit{Floyd length}, $|p|_f$, is the sum the of the Floyd lengths of the edges, i.e. $|p|_f = \Sigma_{i=1}^{n}f(d(e_i, b))$. Given vertices $u, v \in X_f$, their \textit{Floyd distance}, $d_f(u, v)$, is the infimum of the Floyd lengths of paths from $u$ to $v$.

The Cauchy completion $\bar{X}_f$ of the metric space $(X_f, d_f)$ is called the \textit{Floyd completion}. The subspace $\partial_f X  = \bar{X}_f \setminus X$ is the \textit{Floyd boundary}.

We say a Floyd boundary is trivial if it consists of either zero, one or two points. Let $g$ and $f$ be Floyd functions such that $g \le f$. By an easy argument, if $\partial_f X$ is trivial, then $\partial_g X$ is trivial as well.

Given a finitely generated group, its Floyd boundary with respect to a Floyd function $f$ is the Floyd boundary of the group's Cayley graph with respect to $f$. The Floyd boundary, up to Lipschitz equivalence, of a finitely generated group does not depend on a choice of finite generating set when Floyd function $f = \frac{1}{n^2}$ is used \cite{Floyd}.

A path, $p: I \to X$, in a metric space $(X, d)$ is a $C$--\textit{quasi-geodesic} if given any vertices $v, v'$ in the image of $p$, we have the inequalities:
\[\frac{1}{C}d(v,v') - C \le |p(v,v')| \le Cd(v,v') + C\]
where $p(v,v')$ is the subpath of $p$ from $v$ to $v'$. 

We say a graph is \textit{locally finite} if each edge has finite valence. Throughout this paper, we will make use of the Karlsson Lemma, first proved in \cite{K} and generalized in \cite{GP}, which shows quasi-geodesics lying outside a large set are small in the Floyd metric.

\begin{lemma}[Karlsson Lemma, {\cite[Lemma 2.2]{GP}}] \label{lemma_karlsson}
	Given a locally finite graph $X$, Floyd function $f$ and constants $\epsilon, ~ C>0$, there exists a finite set of vertices $K \subset X_f$, such that every $C$-quasi-geodesic which does not intersect $K$ has Floyd length less than $\epsilon$.
\end{lemma}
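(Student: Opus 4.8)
The plan is to take the finite set $K$ of the statement to be a metric ball $B_b(R)$ about the basepoint for a sufficiently large radius $R$. Since $X$ is locally finite and connected, such a ball contains only finitely many vertices, so this is a legitimate choice. With it, a $C$-quasi-geodesic $p$ that does not meet $K$ has all of its vertices at distance $> R$ from $b$, and hence all of its edges $e$ satisfy $d(e,b) > R$. It will therefore suffice to show that the Floyd length $|p|_f = \sum_{e \in p} f(d(e,b))$ can be made smaller than $\epsilon$, uniformly over all such $p$, by taking $R$ large.

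The naive estimate---each edge far from $b$ contributes only the small amount $f(R)$---is not enough, since a quasi-geodesic lying outside $B_b(R)$ can still be arbitrarily long. The point where the quasi-geodesic hypothesis enters is in controlling how much length $p$ can spend near any fixed radius. I would let $p(t_0)$ be a point of $p$ minimizing $m := d(b, p(t_0)) > R$, and split $p$ at $p(t_0)$ into two subpaths $p_1, p_2$, each again a $C$-quasi-geodesic. For $p_2$, let $s_n$ be the last parameter with $d(b, p(s_n)) \le n$. The quasi-geodesic inequality applied to $p(t_0)$ and $p(s_n)$, together with $d(p(t_0), p(s_n)) \le m + n \le 2n$, bounds the length of the subpath between them by $C(m+n) + C \le 2Cn + C$. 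Since every edge of $p_2$ at distance $\le n$ occurs before parameter $s_n$, the number $A_n$ of such edges is at most $2Cn + C$; the same bound holds for $p_1$.

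Writing $a_n$ for the number of edges of $p_2$ at distance exactly $n$ from $b$ (so $a_n = 0$ for $n \le R$ and $\sum_{k \le n} a_k = A_n \le 2Cn + C$), summation by parts converts this linear bound into a bound on the Floyd length:
\[ |p_2|_f = \sum_{n > R} a_n f(n) = \sum_{n > R} A_n\bigl(f(n) - f(n+1)\bigr) \le \sum_{n > R} (2Cn + C)\bigl(f(n) - f(n+1)\bigr), \]
where the boundary term vanishes because $A_N f(N) \le (2CN+C)f(N) \to 0$. A second summation by parts evaluates the right-hand side, up to a multiple of $f(R+1)$, as a constant times $(R+1)f(R+1) + \sum_{n > R+1} f(n)$. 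Here both defining properties of a Floyd function pay off: summability forces the tail $\sum_{n > R+1} f(n) \to 0$, and, since a non-increasing summable sequence satisfies $n f(n) \to 0$ (seen by comparing $n f(2n)$ with the tail $\sum_{k=n+1}^{2n} f(k)$), the term $(R+1)f(R+1) \to 0$ as well. Hence $|p_2|_f \to 0$, and likewise $|p_1|_f \to 0$, as $R \to \infty$, uniformly in $p$; choosing $R$ with $|p_1|_f + |p_2|_f < \epsilon$ finishes the argument.

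I expect the main obstacle---and the only place the geometry is genuinely used---to be the oscillation of the radius $d(b, p(t))$ along $p$. Because a quasi-geodesic need not be radially monotone, one cannot bound the length inside a single annulus directly; the fix is to control the \emph{cumulative} length inside each ball $B_b(n)$ by anchoring the estimate at the minimum-radius point $p(t_0)$, which is exactly what forces the linear-in-$n$ growth of $A_n$. Once that growth is in hand, the remainder is the analytic fact that linear growth is defeated by a summable weight, for which only monotonicity and summability of $f$ are needed.
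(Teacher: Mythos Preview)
The paper does not supply its own proof of this lemma; it is quoted from \cite{GP} (generalising the original geodesic version in \cite{K}) and used as a black box. Your argument is correct and is in fact essentially the standard proof appearing in those references: one takes $K$ to be a large ball about the basepoint, anchors the estimate at a closest point $p(t_0)$ of the quasi-geodesic to $b$, uses the upper quasi-geodesic inequality to bound the number of edges within $B_b(n)$ linearly in $n$, and then sums against the non-increasing summable weight $f$. The two ingredients you isolate---$nf(n)\to 0$ and $\sum_{n>R}f(n)\to 0$---are exactly what Karlsson uses. One small point worth tightening: the bound $A_n\le 2Cn+C$ is off by at most one (the edge immediately after $v_{s_n}$ can still have $d(b,e)\le n$ via the endpoint $v_{s_n}$), but this does not affect the conclusion.
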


\subsection{Divergence}
One may roughly think of the divergence function of a metric space as the best upper bound on the rate a pair of geodesic rays can stray apart from one another. There are many definitions of divergence in the literature. We use here the definition as in \cite{DMS} and \cite{BD}. In section \ref{sec_divergence_thm}, we relate the divergence of a space to its Floyd boundary.

Let $X$ be a metric space. Fix $0< \delta < 1$ and $\gamma \ge 0$. For any three points $a,b,c \in X$ such that $d(c, \{a,b\}) = r > 0$, define:
\[ \text{div}_{\gamma, \delta}(a,b,c) =  \inf \{ |\alpha| \} \]
where $\alpha$ is a path connecting $a, b$ that does not intersect the ball $B_c(\delta r - \gamma)$, and $|\alpha|$ is this path's length. If no such paths exists, define $\text{div}_{\gamma, \delta}(a,b,c) = \infty$.

The \textit{divergence function} $\text{Div}(X) = \text{Div}_{X, \gamma, \delta}(n)$ of the space $X$ is defined as the supremum of $\text{div}_{\gamma, \delta}(a,b,c)$ taken over all $a, b, c$ such that $d(a,b) \le n.$ 

For a large class of metric spaces (including all finitely generated groups) the divergence function is a quasi-isometry invariant up to an equivalence relation on functions \cite{DMS}.

\subsection{Thick spaces} \label{sec_thick_overview}

This subsection gives an overview of the definition of a thick space. We work with the original definition of thickness from \cite{BDM}, with the extra assumption that thick of order 0 spaces are wide. This assumption is also made in the ``strong'' definition from \cite{BD}; however, we do not require the full strength of the definition in \cite{BD}. We point out these differences when relevant.  We refer the reader to the mentioned references for further background on thick spaces.

We first define wide spaces, the elementary building blocks of a thick space.

\begin{definition}[Wide Space] \label{def_wide} A metric space, $X$, is \textit{$C$--wide} if: 
	\begin{enumerate}
		\item Any $x \in X$ is in the $C$ neighborhood of some bi-infinite $C$--quasi-geodesic. 
		\item There exist constants $0 < \delta < 1$ and $\gamma \ge 0$, such that the divergence of $X$, $\text{Div}^{X}_{\gamma, \delta}(n)$, is bounded above by a linear function.
	\end{enumerate}
\end{definition}

\begin{remark}
	The definition given above is slightly different than the usual definition given in terms of asymptotic cones, as the above formulation is more convenient in our setting. However, by \cite[Proposition 1.1]{DMS} the definition above is equivalent to the usual one when $X$ is the Cayley graph of a finitely generated group (and for many other general metric spaces).
\end{remark}

Roughly, $X$ is thick of order $k$ if it is the coarse union of subspaces that are each thick of order at most $k-1$. Furthermore, any two of these subspaces can be ``thickly'' connected by a sequence of these subspaces. This is formally defined below.
\begin{definition}[Thick Space] \label{def_thick} A metric space is \textit{$C$--thick of order 0} if it is $C$--wide.
	
	We say that a metric space $X$ is \textit{ $C$--thick of order at most $k$ with respect to a collection of subsets $\mathcal{Y} = \{ Y_{\alpha} \}$ } if 
	
	\begin{enumerate}
		\item $X = \bigcup_{\alpha \in  \mathcal{A}}{N_C(Y_{\alpha})}$, i.e. $\mathcal{Y}$ coarsely covers $X$.
		\item Every $Y \in \mathcal{Y}$ with the induced metric is $C$--thick of order at most $k-1$.
		\item For every $Y,~ Y' \in \mathcal{Y}$, there exists a sequence of subspaces in $\mathcal{Y}$:
		\[ Y=Y_1, ~Y_2,...,~Y_{n-1},~Y_m = Y'\] 
		 such that $N_C(Y_i) \cap Y_{i+1}$ has infinite diameter, for $1 \le i <m$.
	\end{enumerate}
	
	$X$ is \textit{thick of order k} if $X$ is $C$--thick of order at most $k$ for some $C >0$, and $X$ is not $C'$--thick of order at most $k-1$ for any $C'>0$. 
\end{definition}

\begin{remark}
	The above definition is weaker than that of \cite{BD} in two ways. Firstly, the wide subspaces in a thick structure are not required to have divergence uniformly bounded by the same linear function. Furthermore, the infinite diameter intersections in the definition are not required to be coarsely path connected.
\end{remark}

We say a finitely generated group $G$ is a \textit{thick group} if the Cayley graph of $G$ with respect to a (equivalently, any) finite generating set is a thick space. We remark that this is strictly a metric property and is weaker than the property of algebraically thickness from \cite{BDM}. 

The following lemma is a straightforward consequence of the definitions.

\begin{lemma} \label{lemma_bi_inf_geos}
	Let $X$ be $C$--thick of order at most $k$ with respect to the collection of subsets $\mathcal{Y}$. Given any $Y \in \mathcal{Y}$ and $y \in Y$, $y$ is in the $(k+1)C$ neighborhood of some bi-infinite $C$--quasi-geodesic contained in $Y$. 
\end{lemma}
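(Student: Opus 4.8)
The plan is to prove this by induction on the thickness order, after isolating a slightly stronger free-standing statement. For $j \ge 0$, let $P(j)$ denote the assertion: every point of a space that is $C$-thick of order at most $j$ lies in the $(j+1)C$-neighborhood of some bi-infinite $C$-quasi-geodesic contained in that space. The lemma then follows by applying $P(k-1)$ to $Y$ itself: since $Y \in \mathcal{Y}$ is $C$-thick of order at most $k-1$, statement $P(k-1)$ places each $y \in Y$ within $kC \le (k+1)C$ of such a quasi-geodesic lying in $Y$, which is exactly the conclusion (with a slightly sharper constant than stated).

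The base case $P(0)$ is immediate from Definition \ref{def_wide}: a space that is $C$-thick of order $0$ is by definition $C$-wide, and clause (1) of that definition puts every point within $C = (0+1)C$ of a bi-infinite $C$-quasi-geodesic of the space.

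For the inductive step I would assume $P(j-1)$ and take $W$ to be $C$-thick of order at most $j$ with respect to a collection $\{V_\alpha\}$. Given $w \in W$, the coarse covering $W = \bigcup_\alpha N_C(V_\alpha)$ supplies an index $\alpha$ and a point $v \in V_\alpha$ with $d_W(w,v) \le C$. Since $V_\alpha$ with its induced metric is $C$-thick of order at most $j-1$, $P(j-1)$ yields a bi-infinite $C$-quasi-geodesic $\gamma \subseteq V_\alpha$ together with a point of $\gamma$ at induced distance at most $jC$ from $v$. Induced distance dominates ambient distance, so this point is within $jC$ of $v$ in $W$ as well, and the triangle inequality produces a point of $\gamma$ within $C + jC = (j+1)C$ of $w$. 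As $\gamma \subseteq V_\alpha \subseteq W$, this establishes $P(j)$. Note that only clauses (1) and (2) of Definition \ref{def_thick} are needed; the infinite-diameter intersection condition plays no role here.

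The one delicate point — and the step I expect to be the main obstacle — is the metric with respect to which the output is a $C$-quasi-geodesic. The construction produces $\gamma$ inside an order-$0$ (wide) building block, where it is a $C$-quasi-geodesic for that block's induced metric. The monotonicity $d_W \le d_{V_\alpha}$ used above transfers the neighborhood bounds upward and preserves the \emph{lower} inequality $\tfrac{1}{C}d(v,v') - C \le |p(v,v')|$ when the metric is coarsened to that of $W$ or $X$; the \emph{upper} inequality $|p(v,v')| \le C d(v,v') + C$ is the subtle one, since a path that is length-efficient inside a piece can be shortcut through other pieces of the ambient graph. Making $\gamma$ a genuine $C$-quasi-geodesic of $X$, as the later application of the Karlsson Lemma (Lemma \ref{lemma_karlsson}) requires, is therefore the crux: I would either verify that the upper bound survives passage to the ambient metric under the present hypotheses, or carry the quasi-geodesic constant for the piece's induced metric explicitly and adapt the downstream estimates accordingly.
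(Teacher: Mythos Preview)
Your induction on the thickness order via the auxiliary statement $P(j)$ is exactly the paper's (very terse) argument. The obstacle you flag at the end is not a real one: in Definition~\ref{def_thick} the ``induced metric'' is the subspace metric $d_Y = d_X|_{Y\times Y}$ (the proof of the main theorem makes this explicit, writing that ``$Y_i$ is $C$--thick in the subspace metric''), so the quasi-geodesic inequalities for a path in $Y$ are literally the same inequalities in $X$, and both the upper and lower bounds transfer with no change of constants---your worry would arise only under the intrinsic path metric of the subspace, which is not what is meant here.
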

\begin{proof}
	If $X$ is thick of order $0$, then the claim is satisfied by the definition of a wide space (Definition \ref{def_wide}). Otherwise, if $X$ is thick of order at most $k > 0$, then by Definition \ref{def_thick}, every point of $X$ is distance at most $C$ from a thick of order $k-1$ space. The claim then follows by induction.
\end{proof}

\section{Divergence and the Floyd boundary} \label{sec_divergence_thm}

If a Floyd function decays rapidly in comparison to the divergence function, then the Floyd boundary must be one point:

\begin{proposition} \label{prop_div_floyd}
Let $X$ be a locally finite, infinite, connected graph with divergence function $D(n) = Div_{X, \gamma, \delta}(n)$, and let $f$ be a Floyd function satisfying
\[\limsup_{n \to \infty}{ \Big( D(2n) \cdot f(\delta n- \gamma) \Big) } = 0\]
then the Floyd boundary, $\partial_f X$, is one point.
\end{proposition}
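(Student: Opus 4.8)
The plan is to reduce the statement to a uniform connectivity estimate in the Floyd metric and then feed the divergence hypothesis into it. Concretely, I would first record the standard criterion that $\partial_f X$ consists of a single point provided: (i) $X$ is unbounded, so that $\partial_f X \neq \emptyset$; and (ii) for every $\epsilon > 0$ there is a radius $R$ such that any two vertices $u, v$ lying outside the ball $B_b(R)$ can be joined by a path of Floyd length at most $\epsilon$. Property (i) is immediate, since $X$ is infinite, locally finite and connected, hence unbounded: a geodesic ray based at $b$ gives a $d_f$-Cauchy sequence leaving every finite set, using that $\sum_n f(n) < \infty$ forces the tail sums $T(m) := \sum_{i \ge m} f(i)$ to tend to $0$. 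For the other half, any sequence representing a boundary point must eventually escape every $B_b(R)$, so (ii) forces all boundary points to coincide.

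The heart of the argument is establishing (ii), and the main step is a \emph{same-sphere} estimate. Suppose $u$ and $v$ both lie at distance exactly $m$ from $b$. Applying the divergence function with endpoints $u,v$ and avoided center $b$, and using $d(u,v) \le 2m$ together with monotonicity of $D$, there is a path $\alpha$ from $u$ to $v$ avoiding $B_b(\delta m - \gamma)$ with $|\alpha| \le D(2m)$; the hypothesis forces $D(2m) < \infty$ for large $m$, so such a path exists. Every edge $e$ of $\alpha$ satisfies $d(b,e) \ge \delta m - \gamma$, so since $f$ is non-increasing (condition \ref{floyd_func_cond_a}) each such edge has Floyd length at most $f(\delta m - \gamma)$. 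Summing gives
\[ |\alpha|_f \le |\alpha| \cdot f(\delta m - \gamma) \le D(2m)\, f(\delta m - \gamma), \]
and the hypothesis says precisely this quantity tends to $0$ as $m \to \infty$.

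The main obstacle is that two vertices outside $B_b(R)$ need not lie on the same sphere, and the divergence estimate behaves badly when the two radii differ: pairing the larger radius $n$ (through $D(2n)$) with the smaller radius $m$ (through $f(\delta m - \gamma)$) does not match the form of the hypothesis. I would resolve this by a radial reduction. Given $u$ at distance $m$ and $v$ at distance $n$ with $m \le n$, let $w$ be the point at distance $m$ from $b$ on a geodesic $[b,v]$; then $u$ and $w$ lie on the same sphere, while the terminal segment of $[b,v]$ from $w$ to $v$ has Floyd length at most $T(m)$. Concatenating the same-sphere path from $u$ to $w$ with this radial segment bounds the Floyd distance by $D(2m)\,f(\delta m - \gamma) + T(m)$. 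Since both summands tend to $0$ as $m \to \infty$ and $m \ge R$, choosing $R$ large makes this smaller than $\epsilon$ uniformly, establishing (ii) and hence the proposition. The only remaining points requiring care are the monotonicity of $f$ and of $D$, the existence of the divergence path (guaranteed by finiteness of $D(2m)$, itself forced by the hypothesis), and the interpretation of $f(\delta m - \gamma)$ on integer arguments, all of which are routine.
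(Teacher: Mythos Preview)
Your proof is correct and follows essentially the same strategy as the paper: reduce to the same-sphere case by sliding the farther point radially inward along a geodesic to $b$, then use the divergence bound to connect two points on a common sphere by a path whose Floyd length is at most $D(2m)\,f(\delta m - \gamma)$. The one minor difference is that you bound the radial geodesic segment directly by the tail sum $T(m)=\sum_{i\ge m} f(i)$, whereas the paper invokes the Karlsson Lemma (Lemma~\ref{lemma_karlsson}) for this step; your route is slightly more elementary and self-contained, but otherwise the arguments coincide.
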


\begin{proof} Let $b$ be the basepoint used in constructing the Floyd boundary. We will prove that given $\epsilon > 0$, there exists an $N$ such that for all $x,y \in X$ with $d(x, b), d(y, b) >N$ we have $d_f(x,y) < \epsilon$ (recall $d_f$ is the Floyd distance). 

Choose $N$ such that for $n > N$, the following two conditions are satisfied:
\begin{enumerate}
\item \label{choices_item1} Any geodesic, $\beta$, in the $d(,)$ metric, that does not intersect the ball $B_{b}(\delta n - \gamma)$ has Floyd length $|\beta|_f < \frac{\epsilon}{2}$.

\item \label{choices_item2} $D(2n) \cdot f(\delta n- \gamma) < \frac{\epsilon}{2}$. 
\end{enumerate}
Such an $N$ exists satisfying (\ref{choices_item1}) by Lemma \ref{lemma_karlsson}. We can further choose $N$ large enough to satisfy (\ref{choices_item2}) by our assumption on $D(n)$.

Let $x,y \in X$ be such that $d(x, b) \ge d(y, b) >N$. Set $r = d(y, b)$. Fix the ball $B = B_{b}(\delta N - \gamma)$. Let $\beta$ be a geodesic from $x$ to $b$. Let $x'$ be the point on $\beta$ that is distance $r$ from $b$ and is closest to $x$. Let $\beta'$ be the segment of $\beta$ from $x$ to $x'$. Note that $\beta' \cap B = \emptyset$. Therefore, by condition \ref{choices_item1}, we have that $|\beta'|_f \le \frac{\epsilon}{2}$.

Let $\alpha$ be a shortest path from $x'$ to $y$ avoiding the ball $B$. As $d(x', y) \le 2N$ and $\alpha$ remains outside of $B$, we can guarantee that $|\alpha| \le D(2N)$. Every edge outside the ball $B$ has Floyd length at most $f(\delta n- \gamma)$. We get the following bound on the Floyd length of $\alpha$:
\[|\alpha|_f \le |\alpha| f(\delta n- \gamma) \le D(2n) \cdot f(\delta n- \gamma)\] 
Therefore, by condition \ref{choices_item2}, we have that $|\alpha|_f \le \frac{\epsilon}{2}$. The composition of $\beta'$ followed by $\alpha$ gives a path from $x$ to $y$ of Floyd length less than $\epsilon$. This proves the claim.
\end{proof}

Given a Floyd function $f$, it follows from condition \ref{floyd_func_cond_a} in the definition of a Floyd function that $\lfloor \frac{n}{2} \rfloor f(n) \le f(n) + f(n-1) + \dots + f( \lfloor \frac{n}{2} \rfloor)$ for any positive integer $n$. Furthermore, by condition \ref{floyd_func_cond_b} in the definition of a Floyd function, the right side of this equation must limit to $0$. Thus, any Floyd function grows sublinearly. We thus get the following consequence of the above proposition, which is used in the next section.

\begin{remark} \label{rmk_linear_div}
	If $X$ has linear divergence, then its Floyd boundary is one point for any Floyd function.
\end{remark}

\section{Proof of main theorem} \label{sec_main_thm}

Before proving the main theorem, we first prove the following lemma.

\begin{lemma} \label{lemma_inf_ray}
	Let $X$ be a metric space with the property that every point is distance at most $C$ from a bi-infinite $C$--quasi-geodesic. Fix $b \in X$. There exist constants $K \ge 2$ and $R \ge 0$, only depending on $C$, such that given any $r>R$ and $x \notin B = B_b(r)$, there exists an infinite $C$--quasi-geodesic ray, distance at most $C$ from $x$, that does not intersect the ball $B' = B_b(\frac{r}{KC})$.
\end{lemma}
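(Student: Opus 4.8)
The plan is to exploit the bi-infinite $C$--quasi-geodesic passing near $x$ and to show that its two infinite rays cannot both dip back close to $b$. By hypothesis, I would first choose a bi-infinite $C$--quasi-geodesic $\gamma$ together with a point $x'$ on $\gamma$ satisfying $d(x, x') \le C$; then $d(x', b) \ge d(x, b) - C > r - C$. Write $\gamma^{+}$ and $\gamma^{-}$ for the two infinite $C$--quasi-geodesic rays into which $x'$ splits $\gamma$. Since each ray contains $x'$, each is distance at most $C$ from $x$ and is itself a $C$--quasi-geodesic ray. It therefore suffices to show that, for an appropriate choice of $K$ and $R$, at least one of $\gamma^{+}, \gamma^{-}$ avoids the ball $B' = B_b(\rho)$, where $\rho = \frac{r}{KC}$.

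The key step is a contradiction argument. Suppose both rays meet $B'$, and pick $p^{+} \in \gamma^{+}$ and $p^{-} \in \gamma^{-}$ with $d(p^{\pm}, b) \le \rho$. On one hand, $p^{+}$ and $p^{-}$ are close to each other, since $d(p^{+}, p^{-}) \le 2\rho$, so the \emph{upper} quasi-geodesic bound shows that the subpath of $\gamma$ between them has length at most $2C\rho + C$. On the other hand, this subpath passes through $x'$, so its length is the sum of the lengths of the subpaths from $p^{-}$ to $x'$ and from $x'$ to $p^{+}$; applying the \emph{lower} quasi-geodesic bound to each, together with $d(p^{\pm}, x') \ge d(x', b) - \rho > (r - C) - \rho$, bounds the total length below by $\frac{2}{C}\big( (r-C) - \rho \big) - 2C$. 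Combining the two bounds yields a single linear inequality relating $r$ and $\rho$.

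Substituting $\rho = \frac{r}{KC}$, this inequality takes the form $2r\big(\tfrac{1}{C} - \tfrac{1}{K}(1 + \tfrac{1}{C^{2}})\big) \le 3C + 2$. Choosing $K = 2C + \tfrac{2}{C}$ (which depends only on $C$ and satisfies $K \ge 2$) makes the bracketed coefficient at least $\tfrac{1}{2C}$, so the left-hand side is at least $\tfrac{r}{C}$, forcing $r \le 3C^{2} + 2C$. Taking $R = 3C^{2} + 2C$ then contradicts the assumption $r > R$. Hence at least one of the rays avoids $B'$, and that ray has all the required properties.

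I expect the only genuine obstacle to be the bookkeeping: correctly applying the two directions of the quasi-geodesic inequality (the upper bound to the nearby pair $p^{\pm}$, and the lower bound to each far pair $\{p^{\pm}, x'\}$), using the additivity of length along $\gamma$ through $x'$, and extracting a single clean choice of $K$ and $R$ from the resulting inequality. The conceptual content — that a quasi-geodesic through a point far from $b$ cannot have both of its ends return near $b$ without forming an inadmissibly long path between two nearby points — is transparent; the care lies in tracking the constants so that $K$ and $R$ depend only on $C$.
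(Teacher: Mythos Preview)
Your proposal is correct and follows essentially the same approach as the paper: split the bi-infinite quasi-geodesic near $x$ into two rays at $x'$, assume both hit $B'$, and derive a contradiction by comparing the upper quasi-geodesic bound on $|\gamma(p^{-},p^{+})|$ with a lower bound coming from the fact that this subpath passes through the far-away point $x'$. The only cosmetic difference is that the paper bounds just one of the two subsegments from below using the trivial inequality $|\gamma(x',p_1)| \ge d(x',p_1)$, whereas you use both subsegments with the quasi-geodesic lower bound; your explicit choices $K = 2C + \tfrac{2}{C}$ and $R = 3C^{2}+2C$ are a nice bonus.
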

\begin{proof}
	Fix a choice of $K$ and $r$. Let $x \in X \setminus B$. By assumption, there exists a bi-infinite $C$--quasi-geodesic, $\alpha$, that is $C$--close to $x$. Let $x' \in \alpha$ be such that $d(x, x') \le C$. Let $\alpha_1$ and $\alpha_2$ be the $C$--quasi-geodesic rays based at $x'$, obtained by following $\alpha$ in opposite directions towards infinity.
	
	Suppose $\alpha_1$ and $\alpha_2$ each intersect $B'$ at points $p_1$ and $p_2$ respectively. It follows that $d(p_1,p_2) < \frac{2r}{KC}$. Furthermore, as $d(x,p_1) > \frac{(KC-1)r}{KC}$ and $d(x,x') \le C$, we have that $d(x',p_1) > \frac{(KC-1)r}{KC} - C$. 
	
	Given a quasi-geodesic $\beta$ and points $x, y \in \beta$, we let $\beta(x,y)$ denote the subsegment of $\beta$ between $x$ and $y$. Using the established inequalities and the quasi-geodesic inequalities, we conclude the following:
	\[\frac{2r}{K} + C > Cd(p_1,p_2) + C \ge |\alpha(p_1,p_2)| \ge |\alpha_1(x',p_1)| \ge d(x',p_1) \ge \frac{(KC-1)r}{KC} - C\]
	
	However, there exist constants $K$ and $R$ such that for any $r>R$, we have that $\frac{2r}{K} + C <  \frac{(KC-1)r}{KC} - C$. These contradicting inequalities imply that either $\alpha_1$ or $\alpha_2$ does not intersect $B$ for such choices of $K$ and $R$. This proves the claim.
\end{proof}

For convenience, we name the class of graphs considered in the main theorem.

\begin{definition} [Thick graph]
	Suppose $X$ is an infinite, connected, locally finite graph where each edge is given length $1$. Suppose that $X$ is thick of order $k$ for some $k$ in the path metric. We call such a graph a \textit{thick graph}. In particular, the Cayley graph of a finitely generated thick group is a thick graph.
\end{definition}

\begin{theorem}
Let $X$ be a thick graph, and let $f$ be any Floyd function. Then the Floyd boundary $\partial_f X$ is one point.
\end{theorem}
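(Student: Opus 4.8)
The plan is to prove the equivalent statement that far-out points are Floyd-close: for every $\epsilon > 0$ there is an $N$ such that any $x, y \in X$ with $d(x,b), d(y,b) > N$ satisfy $d_f(x,y) < \epsilon$. This is precisely the assertion that $\partial_f X$ is a single point. As in the proof of Proposition \ref{prop_div_floyd}, the strategy is to connect $x$ and $y$ by a path all of whose edges lie far from the basepoint $b$, and to bound its Floyd length using the Karlsson Lemma (Lemma \ref{lemma_karlsson}) together with the sublinearity of $f$.

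First I would reduce the problem to connecting points that may be taken arbitrarily far from $b$. Since $X$ is a thick graph, every point lies within a uniform distance of a bi-infinite $C$--quasi-geodesic, by combining the covering in Definition \ref{def_thick} with Lemma \ref{lemma_bi_inf_geos}, so the hypothesis of Lemma \ref{lemma_inf_ray} is satisfied. Applying Lemma \ref{lemma_inf_ray} to $x$ and to $y$ produces infinite $C$--quasi-geodesic rays $\rho_x, \rho_y$ emanating near $x$ and near $y$ that avoid a ball $B_b(r')$ whose radius grows with $N$. By the Karlsson Lemma each such ray has Floyd length less than $\epsilon/4$ once $N$ is large. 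Hence it suffices to connect a point $p \in \rho_x$ and a point $q \in \rho_y$, both of which we are free to place as far from $b$ as we wish, by a path of Floyd length less than $\epsilon/2$.

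The heart of the argument is this connection, which I would carry out by induction on the thickness order $k$. The base case $k = 0$ is a wide space, where linear divergence gives the conclusion by Remark \ref{rmk_linear_div}; concretely, one routes a path around $B_b$ whose length is controlled by the divergence and whose Floyd length is therefore small because $f$ is sublinear. For the inductive step I would use the thick structure: $p$ lies near a piece $Y_1$ and $q$ near a piece $Y_m$, and Definition \ref{def_thick} supplies a chain $Y_1, Y_2, \dots, Y_m$ in which consecutive pieces have an infinite-diameter intersection. Because each $N_C(Y_i) \cap Y_{i+1}$ has infinite diameter, I can choose crossing points $z_i$ in it that are as far from $b$ as desired; I would take $z_i$ outside $B_b(N_i)$ with $N_i$ increasing rapidly along the chain. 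Connecting $z_{i-1}$ to $z_i$ takes place inside the single piece $Y_i$, which is thick of order at most $k-1$, so the inductive hypothesis applies and, since both points lie beyond radius $N_i$, bounds the Floyd length of this portion by $\epsilon/2^{i+1}$. Summing the geometric series $\sum_i \epsilon/2^{i+1}$ then yields total Floyd length less than $\epsilon/2$, regardless of how long the chain is.

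The main obstacle is exactly this interplay between the chain and the Floyd length. The chain furnished by thickness has no bound on its length $m$ and a priori wanders close to $b$, so a naive piece-by-piece connection at a fixed radius would accumulate an unbounded number of contributions. The device that defeats this is to send the successive crossing points to infinity, exploiting the infinite diameter of the intersections so that the inductive bound improves geometrically and the total telescopes into a convergent series. A second, more technical difficulty to watch is that the pieces $Y_i$ carry their induced metrics while Floyd length is measured using distances to $b$ in $X$; I would need to ensure that the within-piece connecting paths are genuine quasi-geodesics of the ambient graph, so that the Karlsson Lemma applies in $X$, and that the phrase \emph{far from} $b$ is interpreted consistently in the ambient metric throughout the induction.
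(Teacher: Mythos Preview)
Your approach is essentially the paper's: induction on the thickness order, base case via linear divergence (Remark \ref{rmk_linear_div}), and inductive step by escaping along quasi-geodesic rays and then threading through the chain $Y_1,\dots,Y_m$, applying the inductive hypothesis inside each piece. The one substantive difference is the bookkeeping along the chain. You push the crossing points $z_i$ out to radii $N_i\to\infty$ so that the $i$-th segment contributes $\epsilon/2^{i+1}$ and the total is a geometric series independent of $m$. The paper does something simpler: once $x,y$ are fixed the chain has a definite finite length $m$, so one applies the inductive hypothesis on each $Y_i$ with the uniform tolerance $\epsilon/(12m)$, takes a single radius $N'=\max_i N_i$, and slides along the escaping rays past $N'$ before entering the chain. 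Your worry that ``the chain has no bound on its length $m$'' is therefore a non-issue --- the rays let you choose the working radius \emph{after} you know $m$ --- and the geometric-series device, while correct, is unnecessary. One small point to tighten in your write-up: take the escaping rays \emph{inside} the end pieces $Y_1,Y_m$ (Lemma \ref{lemma_bi_inf_geos} makes this possible) rather than merely in $X$; otherwise the piece near your far-out point $p$ on $\rho_x$ need not be the piece near $x$, and you would have to rechoose the chain after choosing $p$, which is circular since the radius you need for $p$ depends on the chain.
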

\begin{proof}
Let $b \in X$ be the basepoint used in constructing the Floyd boundary. As usual, we denote by $d(,)$ the metric in $X$ and by $d_f(,)$ the Floyd metric. The claim will be shown by induction on the order, $k$, of thickness. In particular, we will prove the following stronger claim:  given $\epsilon > 0$, there exists an $N$ such that for all $x,y \in X$ with $d(x, b), d(y, b) >N$, we have that $d_f(x,y) < \epsilon$. 

The base case when $X$ is thick of order $0$ follows as a particular case of Proposition \ref{prop_div_floyd} as explained in Remark \ref{rmk_linear_div}. Note that the conclusion in the proof of that proposition is actually the stronger claim required by the induction hypothesis. We now assume the claim is true for thick spaces of order at most $k-1$, and we assume that $X$ is thick of order $k$ given by a $C$-tight network, $\mathcal{Y}$, of thick order at most $k-1$ subspaces.

Let $C' = (k+2)C$. Given any $Y \subset \mathcal{Y}$, by Lemma \ref{lemma_bi_inf_geos}, every vertex in $N_C(Y)$ is in the $C'$ neighborhood of some bi-infinite $C'$--quasi-geodesic contained in $Y$. By Lemma \ref{lemma_inf_ray}, there exists constants $K$ and $R$ such that given any $r > R$ and $x \in N_C(Y) \setminus B_b(r)$, there exists a $C'$--quasi-geodesic ray distance at most $C'$ from $x$ which does not intersect the ball $B' = B_b(\frac{r}{KC'})$. Using Karlsson's Lemma \ref{lemma_karlsson}, we choose $N > R$ so that any $C'$--quasi-geodesic which does not intersect the ball $B'$ has Floyd length less than $\frac{\epsilon}{6}$.

Let $x, y \in X \backslash B_b(N)$. As $\mathcal{Y}$ is a thick network, there exists a sequence $Y_1, Y_2, ..., Y_m$ of subspaces in $\mathcal{Y}$ such that $x \in N_C(Y_1)$, $y \in N_C(Y_m)$ and $Y_{i} \cap N_C(Y_{i+1})$ is infinite diameter for each $1 \le i < m$. By the previous paragraph, there exist infinite $C'$--quasi-geodesic rays, $\beta_1 \subset Y_1$ and $\beta_2 \subset Y_m$ based respectively at $x' \in Y_1$ and $y' \in Y_m$ such that $d(x', x), d(y',y) \le C'$. Additionally, $\beta_1$ and $\beta_2$ each do not intersect the ball $B'$.

We note that $d_f(x,x') < \frac{\epsilon}{6}$ as any geodesic between $x$ and $x'$ remains outside the ball $B'$. Similarly, $d_f(y',y)  < \frac{\epsilon}{6}$. Furthermore, given any points $p, q \in \beta_1$, we also have that $d_f(p,q) < \frac{\epsilon}{6}$. The same holds for points on $\beta_2$.

For $1 \le i \le m$, $Y_i$ is $C$--thick in the subspace metric. By the induction hypothesis we can choose $N_i$ such that given any $y_1, y_2 \in Y_i$ with $d(y_1, b), d(y_2, b) > N_i$, it follows that  $d_f(y_1,y_2) < \frac{\epsilon}{12m}$. Set $N' = \max \{N, N_1,N_2,...,N_m\}$.

Let $x_1''$ and $x_m''$ be points respectively on $\beta_1$ and $\beta_2$ such that $d(x_1'',b), d(x_m'', b) > N'$. For each $1 \le i < m$, choose points $x_i \in Y_i \cap N_C(Y_{i+1})$ such that $d(x_i, b) > N' + C$. This is possible as these sets have infinite diameter. Furthermore, choose $x_i' \in Y_{i+1}$, for $1 \le i < m$, such that $d(x_i, x_i') < C$.

We get the following bound on the Floyd distance, $d_f(x_i, x_{i+1})$:
\[d_f(x_i, x_{i+1}) \le d_f(x_i, x_i') + d_f(x_i', x_{i+1}) < 2\frac{\epsilon}{12m} = \frac{\epsilon}{6m} \]

Finally, we are ready to bound the Floyd distance, $d_f(x,y)$:
\begin{align*}
d_f(x, y) &\le d_f(x, x') + d_f(x', x'') + d_f(x'', x_1) + \Big(\sum_{i=1}^{m-1} {d_f(x_i, x_{i+1})}\Big) + d_f(x_m, y'') + d_f(y'', y') + d_f(y', y)  \\
&\le \frac{\epsilon}{6} + \frac{\epsilon}{6} + \frac{\epsilon}{12m} + (m-1)\frac{\epsilon}{6m} + \frac{\epsilon}{12m} + \frac{\epsilon}{6} + \frac{\epsilon}{6} \\
& = \frac{5 \epsilon}{6} < \epsilon
\end{align*}
\end{proof}

\bibliographystyle{amsalpha}
\bibliography{mybibliography}

\end{document}